\let\NAT@parse\undefined
\newcommand*{\md}{\xi}
\title{\LARGE \bf
Risk-Sensitive Model Predictive Control for Interaction-Aware Planning\\--- A Sequential Convexification Algorithm ---
}
\author{Renzi Wang \and Mathijs Schuurmans \and Panagiotis Patrinos%
\thanks{
        Work supported by
    the Research Foundation Flanders (FWO) postdoctoral grant 12Y7622N and research projects
    G081222N, G033822N, and G0A0920N; 
    Research Council KU Leuven C1 project No. C14/24/103; 
    European Union’s Horizon 2020 research and innovation programme under the Marie Skłodowska-Curie grant agreement No. 953348.
}%
\thanks{KU Leuven, Department of Electrical Engineering \textsc{esat-stadius} -- %
Kasteelpark Arenberg 10, bus 2446, B-3001 Leuven, Belgium
\newline
{\sf
        \{%
                \href{mailto:renzi.wang@kuleuven.be}{renzi.wang},
                \href{mailto:mathijs.schuurmans@kuleuven.be}{mathijs.schuurmans},
                \href{mailto:panos.patrinos@kuleuven.be}{panos.patrinos}%
        \}%
        \href{mailto:renzi.wang@kuleuven.be,mathijs.schuurmans@kuleuven.be,panos.patrinos@kuleuven.be}{@kuleuven.be}%
}
}%
}
\begin{document}

\maketitle
\thispagestyle{empty}
\pagestyle{empty}

\begin{abstract}
    This paper considers risk-sensitive model predictive control for stochastic systems with a decision-dependent distribution.
    This class of systems is commonly found in human-robot interaction scenarios.
    We derive computationally tractable convex upper bounds to both the objective function, 
    and to frequently used penalty terms for collision avoidance,
    allowing us to efficiently solve the generally nonconvex optimal control problem 
    as a sequence of convex problems. 
    Simulations of a robot navigating a corridor demonstrate the effectiveness
    and the computational advantage of the proposed approach.
\end{abstract}

\section{Introduction}
Stochastic dynamical systems are \rev{
	used for a wide variety of control tasks.
In particular, for applications such as autonomous driving and robot navigation,
where it is crucial to predict the behavior of surrounding humans, which cannot
be assumed to be perfectly deterministic\cite{montali2023waymo}.
}{widely used in control tasks,
particularly in applications such as autonomous driving and robot navigation,
where predicting non-deterministic human behavior is crucial \cite{montali2023waymo}.}
\rev{Traditionally, stochastic influences were typically modeled as exogenous
disturbances to the controlled system.
More recently, however, the use of}{While stochastic influences were traditionally modeled as exogenous disturbances,} mixture models has
gained popularity for this purpose \cite{wang2024imitation, salzmann2020trajectron++}.
In contrast to the exogenous stochastic switching models such as Markov jump models \cite{costa2005discrete},
this class of models introduce a dependency of the probability on the system's state,
significantly boosting its ability to model interaction between the controlled system
and \rev{the uncertainty in the environment}{environmental uncertainty}.

Conventional stochastic model predictive control (MPC) methods optimize the expected value of
the cost function (known as \emph{risk-neutral} cost).
This typically leads to tractable formulations, as the linearity of the
expectation preserves convexity of the cost function for exogenous uncertainty.
However, this is no longer the case for decision-dependent distributions, which
generally lead to non-convex problems, even when the underlying cost function is
convex.
Furthermore, studies in human behavior modeling  \cite{majumdar2017risk, ratliff2019inverse} show that humans are risk-sensitive.
This insight suggests that designing risk-sensitive controller would enable more natural human-robot interaction.
Based on those two observations, we aim to develop a risk-sensitive optimal control
algorithm using the exponential utility function.
Besides being beneficial from a modeling point of view, we observe that
the exponential utility function introduces additional structure that facilitates
the development of a tailored solution method\rev{ that}{. This method} is specifically designed to address
the nonconvexity arising from the state-dependent distributions.

\textbf{Contributions}
We summarize the contributions of this paper as follows:
\begin{inlinelist}
	\item We propose a risk-sensitive MPC formulation for a \rev[contribution]{mixture model
	with state-dependent distributions}{mixture-of-experts (MoE) model \cite{jordan1994hierarchical} that expands beyond traditional Gaussian-only uncertainty models while explicitly accounting for the state-dependency—a crucial property for modeling interactive behaviors.}
	This formulation can be interpreted as a smoothed version of a two-player game, which may be either
	cooperative or adversarial, based on the value of a hyperparameter;
	\item We develop an algorithm based on Majorization-Minimization (MM) principle \cite{lange2016mm} 
	that \rev{}{explicitly exploits the structure of the proposed risk-sensitive formulation. The algorithm} converts the nonconvex optimization problem into a sequence of convex problems.
	This includes a novel reformulation for the nonconvex collision penalty, which is a critical component in navigation systems.
	\item Our numerical experiments validate the effectiveness of both our algorithm and the formulation,
	demonstrate how the risk-sensitivity parameter impacts interaction behavior,
	and show significant computational advantages over off-the-shelf solvers for nonlinear programming.
\end{inlinelist}

\subsection{Related Work}\label{sec: related_work}
\subsubsection{Controlling a stochastic mixture model}

Human behavior modeling in interactive settings has increasingly employed mixture models to capture its multi-modal nature.
When formulating optimal control problems,
scenario trees are commonly employed to roll out the dynamics.
Several works \cite{wang2023interaction, hu2022active, schmerling2018multimodal} focus on minimizing the expected value of the cost function,
with \cite{wang2023interaction, hu2022active} employing general nonlinear programming solvers and \cite{schmerling2018multimodal} utilizing sampling-based methods.
\cite{chen2022interactive} considers both expectation and Conditional Value-at-Risk in their objective functions,
and solves the resulting problems via sequential quadratic programming.

\subsubsection{Risk-sensitive control using the exponential utility function}
The risk sensitive control problem with the exponential utility function has a long history \cite{howard1972risk, jacobson1973optimal,whittle1989entropy,whittle2002risk}.
In recent years, this type of method has also been applied in reinforcement learning \cite{moos2022robust, noorani2023risk, liang2024bridging}.
When applied to interactive settings,
\cite{medina2013risk,wang2020game} consider the joint dynamics to be linear with an additive Gaussian noise and solve the problem based on Ricatti recursion.
\cite{nishimura2020risk} employs a probabilistic machine learning model \cite{salzmann2020trajectron++} that can be conditioned on future trajectories,
and solves the optimal control problem using an approach based on sensitivity analysis.
However, their sensitivity analysis does not account for how perturbed future trajectories affect the distribution, and consequently the cost.
In contrast, our work explicitly considers the decision-dependent distribution in the mixture model.

\subsubsection{Majorization-minimization-based solver for control problems}
The expectation-maximization (EM) \cite{dempster1977maximum} algorithm has already been used to solve optimal control problems \cite{peters2010relative, levine2018reinforcement, noorani2023risk},
where Jensen's inequality is used to derive the majorizer.
Our work extends this beyond Jensen's inequality.
Unlike \cite{moehle2021risk}, which applies the MM principle only as a substep to find the optimal disturbance sequence in linear systems, 
we apply MM to the entire optimal control problem for mixture models.

The paper is organized as follows:
We first present the risk-sensitive formulation and its interpretation in \cref{sec: problem_formulation}.
We then derive the solution method in \cref{sec: mm_solver}.
Finally, we propose an addition formulation for collision penalty and validate the proposed method through a numerical experiment in \cref{sec: numerical_experiment}.

\rev{
\subsection*{Notation}}{}
\rev{}{\textbf{Notation}}
Let $\R^d_+$ denote the $d$-dimensional positive orthant. 
Let $\mathbf{1}_d = \bsmat{1 & \dots & 1} \in \re^d$.
We denote the cone of $m \times m$ positive definite matrices by $\mathbb{S}_{++}^{m}$,
and the probability simplex $\Delta^d = \{p \in \re^d_{+} \mid \mathbf{1}_d^\top p = 1\}$.
Define $\lse: \R^{d} \to \R$ as
\(
    \lse(x) \! =\! \ln \big(
        \textstyle{\sum_{j=1}^{d}} \exp(x_j) 
    \big),
\)
and the softmax function $\sigma: \R^d \to \Delta_{d}$ 
with components 
\(
    \sigma_i(x) \! = \!\exp\big(x_i\big) / \sum_{j=1}^{d}\exp\big(x_j\big).
\)
\rev[notation_expectation]{
We use $\E_\Pi[x]$ to represent the expectation of random variable $x$ under the distribution $\Pi$,
and $\var_\Pi[x]$ to denote the variance of $x$ under $\Pi$.}{
    For discrete distribution $\Pi \in \Delta^n$ and 
    $X \in \re^n$ representing realizations of random variable $x$,
    we denote $\E_\Pi[X] = \sum_{i = 1}^n \Pi_i X_i$
    and $\var_\Pi[X] = \sum_{i = 1}^n \Pi_i (X_i - \E_\Pi[X])^2$.
}
\rev[notation_N]{}{Let $\N_{[a, b]} = \N \cap [a, b]$.}
\rev[notation_vecmax]{}{We denote by $\vecmin{L}$ ($\vecmax{L}$), the smallest (largest) elements of a vector $L$.}

\section{Problem Formulation}\label{sec: problem_formulation}

We consider a class of stochastic systems \rev{of}{in} the form
\rev{}{of an MoE model \cite{jordan1994hierarchical}:}
\begin{subequations}\label{eq: system}
    \begin{align}
        \xi_{t} \sim &\; \sigma(\rev{\Theta^\top x_t}{\Theta x_t}), \label{eq: switching_mechanism}\\
        x_{t+1} = &\; A_{\xi_t} x_t + B_{\xi_t} u_t,
    \end{align}
\end{subequations}
where for all $t \geq 0$, 
$x_t \in \mathbb{X} \subset \re^{n_x}$ denotes the continuous system state,
and $u_t \in \mathbb{U} \subset \re^{n_u}$ is the control input. 
The sets $\mathbb{X}$ and $\mathbb{U}$ are nonempty closed convex sets.
The state $x_t$ is governed by $d$ linear \rev{systems that switch}{experts that selected} 
according to the value of the random variable $\xi_t \in \Xi = \{1, \dots, d\}$.
We refer to $\xi_t$ as the \textit{mode} at time $t$. 
At every time step $t$, $\xi_t$ \rev{get}{is} sampled randomly according to the \rev{}{gate} distribution \eqref{eq: switching_mechanism},
whose argument is allowed to depend linearly on the state $x_t$.
In this work, we assume the parameters
\rev[theta_trans_problem_formulation]{$\Theta \in \re^{n_x \times d}$}{$\Theta \in \re^{d \times n_x}$}, $A_i \in \re^{n_x \times n_x}$,
$B_i \in \re^{n_x \times n_u}$, $i \in \Xi$ to be known.
In practice, these can be estimated from data using \cite{wang2024em++}.

\begin{remark}
    Note that the argument of $\sigma$ in \cref{eq: switching_mechanism}
    may as well be an affine function of $x_t$, allowing to model exogenous switching 
    of $\xi_t$, for instance. However, we will
    not account for this case explicitly as it can simply be 
    modeled by augmenting the state $x_t$ with a constant 1.
\end{remark}

\begin{figure}[tb!]
    \centering
    \vspace*{1mm}
    \resizebox{0.8\linewidth}{!}{
    \usetikzlibrary{decorations.pathreplacing}
\colorlet{modecolor}{MidnightBlue}
\def\nodesize{10pt}
\begin{tikzpicture}
    {\footnotesize
    \tikzset{
        label/.style={draw=none, font=\scriptsize, inner sep=0, outer sep=0, node distance=20pt}, 
        toplabel/.style={draw=none, 
                     font=\scriptsize,
                     inner sep=0, outer sep=5pt,
                     node distance=13pt, anchor=south, color=gray,
                     fill=white},
        braces/.style={decorate,decoration={brace,amplitude=4pt},xshift=4pt, gray}, 
        braceslabel/.style={align=center, anchor=west, gray, font=\scriptsize, midway, inner sep=10pt},
        conn/.style={fill=white, fill opacity=0.8, text opacity=1,
                    inner sep=2pt, draw=none, text=gray},
        highlight/.style={
                        circle,
                        draw=modecolor,
                        line width=0.5mm,
                        minimum size=\nodesize,
                        fill=modecolor!20,
                        inner sep=0pt, 
                        anchor=center
                    }
    };

    \scenariotreecoordinates[%
        hspace=10em, vspace=1pt, nodesize=10pt, vspace=15pt,
        branching_factor=2, branching_level=2,horizon=2,skips=1
    ]{}

    \drawtimeline{$k$}{1.15}
    \foreach \t in {0,...,\hor}{
        \annotatestage{\t}{$\t$}{}
    }

    \fillsubtree{0}{\hor}{\name}
    \foreach \t in {0, ..., \hor}
    {
        \node[highlight] (node/\name/\t1) at (\name/\t1) {};
    }
    
    \newcounter{nodecnt}
    \setcounter{nodecnt}{0}
    \newcommand\indexnode[1]{
        \node (labelnode/#1) at (#1) {\thenodecnt};
        \stepcounter{nodecnt}
    }
    \newcommand\annotatenode[1]{
        \node[above of=labelnode/#1, label, gray, conn, node distance=12pt] {$(\nodevar{x}{\thenodecnt}, \nodevar{u}{\thenodecnt})$};
        \stepcounter{nodecnt}
    }
    \newcommand\annotateleaf[1]{
        \node[above of=labelnode/#1, label, gray, conn, node distance=12pt] {$\nodevar{x}{\thenodecnt}$};
        \stepcounter{nodecnt}
    }
    \doforeverynode{\indexnode}{0}{\hor}{}

    \setcounter{nodecnt}{0}
    \pgfmathparse{int(\hor-1)}
    \doforeverynode{\annotatenode}{0}{\pgfmathresult}{}
    \foreach \n in {1,...,4}{
        \annotateleaf{tree/\hor\n};
    };

    \newcounter{modenode}\setcounter{modenode}{1}
    \providecommand\labelconnection[4]{
        \foreach \lcltimectr in {#1,...,0}
        {
            \computenchildren{\lcltimectr} %
            \ifnum \nc>1
                \breakforeach 
            \fi
        }
        \path (node/tree/#1#2) --
              (node/tree/#3#4)
              node[conn,label,midway] (connect\themodenode) {
                \pgfmathparse{int(\nc - mod(#4, \nc))}  %
                $\nodevar{\md}{\themodenode}=\pgfmathresult$
            };
        \stepcounter{modenode}
    }
    \pgfmathparse{int(\hor-2)}
    \doforeveryconnection{\labelconnection}{0}{\pgfmathresult}
    \path (node/tree/11) --
          (node/tree/21)
          node[conn,label,midway] (connect\themodenode) {
          \pgfmathparse{int(\nc - mod(1, \nc))}  %
          $\nodevar{\md}{\themodenode}=\pgfmathresult$
    };

    \fill[opacity=0., modecolor, rounded corners] ($(connect1.north west) - (5pt,-5pt)$) coordinate(topleft) rectangle ($(connect2.south east) + (5pt,-5pt)$); 
        \node[text=modecolor, anchor=south, fill opacity=0.8, fill=white, text opacity=1, yshift=-55pt] at (topleft) {
            $\md_0=\left(\nodevar{\md}{1},\nodevar{\md}{2}\right) $
        };

    \node[text=modecolor, left of=node/\name/24, yshift=-5pt, xshift=-15pt] {$\nodes(N)=\{3, 4, 5, 6\}$};
    \node[text=modecolor, left of=node/\name/21, yshift= 5pt, xshift=-10pt] {$\scenanc(3) = \{0, 1\}$};

    }
\end{tikzpicture}
    }\caption{\small{A fully branched scenario tree \rev[specification_d]{of horizon $N=2$}{with $d=2$ and $N=2$}.}}
    \label{fig: scen-tree-illustration}
\end{figure}

Let $N$ be the prediction horizon.
Since $\Xi$ is a finite set,
the realization of the stochastic process $\{x_t, u_t\}_{t = 1}^N$ satisfying \eqref{eq: system} can be represented on a scenario tree \cite{pflug_MultistageStochasticOptimization_2014},
as illustrated in \cref{fig: scen-tree-illustration}.
We briefly introduce the scenario tree notation needed for this paper.
For detailed notation, we refer to \cite{wang2023interaction}.

A scenario tree captures all possible outcomes of a stochastic process over $N$ time steps.
We denote the nodes at step $k \in \N_{[0, N]}$ by $\nodes(k)$ and $\nodes\left(a,b\right) = \bigcup_{k=a}^{b} \nodes(k)$ for $a < b \in \N_{[0, N]}$.
Let $\ve{u} \dfn \{\nodevar{u}{\iota}\}_{\iota\in\nodes(0, N-1)}$,
and \rev{system}{} states $\ve{x} {\dfn} \{\nodevar{x}{\iota}\!\}_{\!\iota\in\nodes(0, N)}$.
The \rev{stochastic}{}dynamics \eqref{eq: system} can be represented as 
\[
    \nodevar{x}{\nextnode{\iota}} = A_{\nodevar{\xi}{\rev[fontsize]{\nextnode{\iota}}{\mathlarger{\nextnode{\iota}}}}}\nodevar{x}{\iota} + B_{\nodevar{\xi}{\rev{\nextnode{\iota}}{\mathlarger{\nextnode{\iota}}}}}\nodevar{u}{\iota},
\]
where $\nextnode{\iota}$ denotes the child node of $\iota$ corresponding to
the realization $\nodevar{\xi}{\nextnode{\iota}}$.
Each leaf node $s \in \nodes(N)$ corresponds to a unique scenario—a path from root to leaf representing one realization of the stochastic process.
For $s \in \nodes(N)$, 
we denote by $\scenanc(s)$ all nodes on this path excluding $s$ itself.
Let the convex functions $\ell(x, u): \re^{n_x} \times \re^{n_u} \to \re$
and $\ell_N(x): \re^{n_x} \to \re$ denote the stage costs and terminal cost respectively.
Then, the cost associated with a scenario $s$ is
\begin{equation}\label{eq: cost_scenario}
    L_{s}(\ve{x}, \ve{u}) = \ell_{N}(\nodevar{x}{s}) 
        + \textstyle{\sum_{\iota \in \scenanc(s)}} \ell(\nodevar{x}{\iota}, \nodevar{u}{\iota}).
\end{equation}

A conventional stochastic MPC formulation considers minimizing the expected cost in the optimal control problem (OCP), i.e., 
for a given initial state $x_t \in \mathbb{X}$,
\begin{alignat}{3}\label{eq: risk_neutral_ocp}
    \minimize_{(\ve{x}, \ve{u}) \in \mathbb{C}(x_t)} &\;&\mathcal{L}^{\mathrm{n}}(\ve{x}, \ve{u}) \dfn&
        \tlsum_{s\in \nodes(N)} p(s\midsc \ve{x}) L_{s}(\ve{x}, \ve{u}),
\end{alignat}
where the set $\mathbb{C}(x_t)$ is defined as
\begin{equation*}
    \begin{aligned}
        \mathbb{C}(x_t) \dfn \left\{
        \begin{array}{c}
            \hspace{-0.5em}
            (\ve{x}, \ve{u})
        \end{array}
        \middle|
        \resizebox{0.2\textwidth}{!}{$
        \begin{array}{l}
            \nodevar{x}{0}   = x_t, \\
            \nodevar{x}{\nextnode{\iota}} = A_{\nodevar{\xi}{\rev{\nextnode{\iota}}{\mathlarger{\nextnode{\iota}}}}}\nodevar{x}{\iota} + B_{\nodevar{\xi}{\rev{\nextnode{\iota}}{\mathlarger{\nextnode{\iota}}}}}\nodevar{u}{\iota}, \\
            \nodevar{u}{\iota} \in \mathbb{U}, \, \forall \iota \in \nodes(0, N-1), \\
            \nodevar{x}{\iota} \in \mathbb{X}, \, \forall \iota \in \nodes(0, N),
        \end{array}
        $}
        \right\}.
    \end{aligned}
\end{equation*}
The probability for a given scenario is defined as
\begin{equation}\label{eq: prob_scenario}
    p(s\midsc \ve{x}) 
    = \tlprod_{\iota \in \scenanc(s)} p(\nodevar{\xi}{\nextnode{\iota}}\midsc \ve{x})
    = \tlprod_{\iota \in \scenanc(s)} \sigma_{\nodevar{\xi}{\rev{\nextnode{\iota}}{\mathlarger{\nextnode{\iota}}}}}\big(
        \rev{\Theta^\top \nodevar{x}{\iota}}{\Theta \nodevar{x}{\iota} }
    \big).
\end{equation}
We denote by $P(\ve{x})  \in \Delta^{d^N}$ the vector whose elements represent the scenario probability 
$p(s \midsc \ve{x})$.
We refer to \eqref{eq: risk_neutral_ocp} as the \emph{risk-neutral formulation}.
Due to the state-dependent distribution $P(\ve{x})$,
problem \eqref{eq: risk_neutral_ocp} is nonconvex,
even when the loss function $L_s$ is convex,
making the problem hard to solve.

Risk-sensitive formulations use an exponential utility function \cite{jacobson1973optimal,whittle1989entropy} in the OCP, 
which is defined as 
\(
    \rho_\gamma(L, \rev{p}{P}) \dfn \tfrac{1}{\gamma}\ln \mathbb{E}_{P\rev{(\ve{x})}{}}\big[
    \exp(\gamma L )\big],
\)
where $\gamma \neq 0$ determines the degree of risk-sensitivity.
For $\gamma > 0$, it is also known as the entropic risk measure \cite[Example 6.20]{shapiro2021lectures}.
It will be convenient
to let $\gamma > 0$ and define the OCPs
\begin{alignat}{3}\label{eq: optimistic_ocp}
    \smashoperator{\minimize_{(\ve{x}, \ve{u}) \in \mathbb{C}(x_t)}} 
    \loss^{\mathrm{o}}\!(\ve{x},\! \ve{u})
    &\dfn
    {-}\!\tfrac{1}{\gamma}\ln\! 
    \E_{P(\ve{x})}
    \big[
        \exp \big({-}\gamma L(\ve{x},\! \ve{u}) \big)
    \big],\\
    \label{eq: pessimistic_ocp}
    \smashoperator{\minimize_{(\ve{x}, \ve{u}) \in \mathbb{C}(x_t)}} \loss^{\mathrm{p}}\!(\ve{x},\! \ve{u})
    &\dfn
    \phantom{-}\!\tfrac{1}{\gamma}\ln\! 
    \E_{P(\xx)}
        \big[
        \exp\big(\phantom{-}\gamma L(\ve{x},\! \ve{u}) \big)
    \big].
\end{alignat}
We refer to \eqref{eq: optimistic_ocp} as the \emph{optimistic formulation} and to \eqref{eq: pessimistic_ocp} as the \emph{pessimistic formulation}.

\subsection{Interpretation of the risk-sensitive formulation}\label{sec: risk_sensitive_interpretation}
\subsubsection{Regularized risk-neutral problem}\label{sec: connection_risk_neutral}
As presented in \cite[Remark 2]{tembine2013risk},
the risk-sensitive formulation can be viewed as a regularized risk-neutral problem,
in the form:
\begin{align*}
    \loss^\opti &\; = \E_{P(\ve{x})}(L) - \tfrac{\gamma}{2} \var_{P(\ve{x})}(L) + o(\gamma), \\
    \loss^\pess &\; = \E_{P(\ve{x})}(L) + \tfrac{\gamma}{2} \var_{P(\ve{x})}(L) + o(\gamma),
\end{align*}
when $\gamma$ is around 0.
The approximation is obtained by applying Taylor expansion.
For the optimistic case, higher cost variance across the scenario distribution $P(\ve{x})$ is preferred.
In contrast, the pessimistic case penalizes the cost variance across scenarios.

\subsubsection{Game-theoretical interpretation}

Risk-sensitive formulations \eqref{eq: optimistic_ocp} and \eqref{eq: pessimistic_ocp}
can be viewed as smoothed versions of a collaborative and a \rev{zero-sum}{competitive} game, respectively,
as we show in the following result.

\begin{lemma} \label{lem:limits}
	For any $x_t \in \mathbb{X}$, and $(\xu) \in \C(x_t)$, we have
	\[
    {\small 
		\underbar{L}(\xu)
		\leq
		\loss^\opti(\xu)
		\leq
    \E_{\rev{}{P(\xx)}}[L(\xu)]
		\leq
		\loss^\pess(\xu)
		\leq
		\bar{L}(\xu),
  }
	\]
	where $\underbar{L} \dfn \vecmin L$ and $\bar{L} \dfn \vecmax L$.
	Furthermore,
	\begin{enumerate}[label=(\roman*)]
		\item \label{item:o0} $\lim_{\gamma \to 0} \loss^\opti(\xu) = \E_{\rev{p}{P}(\xx)}[ L(\xu) ]$;
		\item \label{item:ooo} $\lim_{\gamma \to \infty} \loss^\opti(\xu) = \vecmin L(\xu)$;
		\item \label{item:p0} $\lim_{\gamma \to 0} \loss^\pess(\xu) = \E_{\rev{p}{P}(\xx)}[ L(\xu) ]$;
		\item \label{item:poo}$\lim_{\gamma \to \infty} \loss^\pess(\xu) = \vecmax L(\xu)$.
	\end{enumerate}
\end{lemma}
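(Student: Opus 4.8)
The plan is to reduce everything to one elementary object. For $\Pi \in \Delta^{d^N}$ with strictly positive entries and an arbitrary vector $v \in \re^{d^N}$, keep the notation $\rho_\gamma(v,\Pi) = \tfrac1\gamma \ln \E_\Pi[\exp(\gamma v)]$. The scenario probability \eqref{eq: prob_scenario} is a product of softmax entries, hence strictly positive, so $\Pi \dfn P(\ve{x})$ always has full support, and this is the only regime needed. With this object, $\loss^\pess(\xu) = \rho_\gamma(L(\xu), P(\ve{x}))$ and, by a change of sign inside the exponential, $\loss^\opti(\xu) = -\rho_\gamma(-L(\xu), P(\ve{x}))$; since the largest entry of $-L$ equals $-\vecmin{L}$ and $\E_\Pi[-L] = -\E_\Pi[L]$, every optimistic claim is the corresponding pessimistic one applied to $-L$ and negated. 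So it suffices to establish, for all such $v,\Pi$: \textbf{(a)}~$\E_\Pi[v] \le \rho_\gamma(v,\Pi) \le \vecmax{v}$ for every $\gamma > 0$; \textbf{(b)}~$\rho_\gamma(v,\Pi) \to \E_\Pi[v]$ as $\gamma \downarrow 0$; \textbf{(c)}~$\rho_\gamma(v,\Pi) \to \vecmax{v}$ as $\gamma \to \infty$.

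I would prove \textbf{(a)} directly. The upper bound is monotonicity: $\gamma > 0$ gives $\exp(\gamma v_s) \le \exp(\gamma\,\vecmax{v})$ for each $s$, so $\E_\Pi[\exp(\gamma v)] \le \exp(\gamma\,\vecmax{v})$, and applying the increasing map $\tfrac1\gamma \ln(\cdot)$ finishes it. The lower bound is Jensen's inequality for the convex exponential, $\E_\Pi[\exp(\gamma v)] \ge \exp(\E_\Pi[\gamma v]) = \exp(\gamma\,\E_\Pi[v])$, followed again by $\tfrac1\gamma\ln(\cdot)$. Feeding $v = L(\xu)$ and $v = -L(\xu)$ through the two identities of the previous paragraph then yields the full chain $\vecmin{L} \le \loss^\opti \le \E_{P(\ve{x})}[L] \le \loss^\pess \le \vecmax{L}$ of the lemma.

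Statement \textbf{(b)} gives items~\ref{item:o0} and~\ref{item:p0}, and also underpins the $o(\gamma)$ Taylor expansion invoked in \cref{sec: connection_risk_neutral}. Here I would note that $g(\gamma) \dfn \ln\E_\Pi[\exp(\gamma v)]$ is smooth on $\re$ (a finite sum of exponentials, bounded away from $0$), with $g(0) = 0$ and $g'(\gamma) = \E_\Pi[v\exp(\gamma v)] / \E_\Pi[\exp(\gamma v)]$, hence $g'(0) = \E_\Pi[v]$; therefore $\rho_\gamma(v,\Pi) = (g(\gamma)-g(0))/\gamma \to g'(0) = \E_\Pi[v]$ (equivalently a first-order Taylor expansion, or L'Hôpital on the $0/0$ quotient). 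Statement \textbf{(c)} gives items~\ref{item:ooo} and~\ref{item:poo}; here I would factor out the dominant mode, $\E_\Pi[\exp(\gamma v)] = \exp(\gamma\,\vecmax{v}) \sum_{s\in\nodes(N)} \Pi_s \exp\!\big({-}\gamma(\vecmax{v} - v_s)\big)$. As $\gamma \to \infty$, each summand with $v_s < \vecmax{v}$ vanishes and each with $v_s = \vecmax{v}$ stays equal to $\Pi_s$, so the finite sum converges to $q \dfn \sum_{s:\, v_s = \vecmax{v}} \Pi_s$, whence $\rho_\gamma(v,\Pi) = \vecmax{v} + \tfrac1\gamma \ln\!\big(\sum_{s} \Pi_s \exp(-\gamma(\vecmax{v} - v_s))\big) \to \vecmax{v}$.

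The only step requiring more than bookkeeping is $q > 0$ in \textbf{(c)}: the limit is the genuine $\vecmax{L}$, rather than a maximum over a proper sub-support, precisely because $P(\ve{x})$ charges every scenario --- which is where the strict positivity of the softmax in \eqref{eq: prob_scenario} enters. Everything else is monotonicity of $\exp$ and $\ln$, a single application of Jensen's inequality, and elementary limits of finite sums, so I do not anticipate further obstacles.
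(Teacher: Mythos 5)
Your proposal is correct and follows essentially the same route as the paper: Jensen's inequality plus monotonicity of $\exp$ and $\ln$ for the chain of inequalities, a L'H\^opital/Taylor argument at $\gamma \to 0$, and a dominant-term bound at $\gamma \to \infty$, with the optimistic case reduced to the pessimistic one by negating $L$ (the paper's ``shown analogously''). The only cosmetic difference is that for $\gamma\to\infty$ you factor $\exp(\gamma\,\vecmax{v})$ out of the weighted sum rather than sandwiching $\rho_\gamma$ between $\tfrac{1}{\gamma}(\log\underline{p}+\lse(\gamma L))$ and $\tfrac{1}{\gamma}\lse(\gamma L)$ as the paper does; both hinge on the same strict positivity of the scenario probabilities.
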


\begin{proof}
    Let $L \in \R^M$, and $p \in \interior\Delta^M$ be arbitrary vectors with $M \in \N_+ \setminus \{0\}$.

	The first and last inequality follow from the fact that
    $\gamma \underline{L} \leq \gamma L_i \leq \gamma \bar{L}$ and that $\exp$ and $\log$ are monotone
	increasing functions.
    The other inequalities follow directly from Jensen's inequality: 
	$\exp(\gamma \sum_{i=1}^M p_i L_i) \leq \sum_{i=1}^M p_i \exp(\gamma L_i)$.
	Composing with $\ln(\cdot) / \gamma$
	gives the third inequality.
	Applying the Jensen's inequality to $x \mapsto \exp(-\gamma x)$ 
	and composing with $\ln (\cdot)/ (-\gamma)$ gives the second inequality, 
	with the inequality direction flipped due to division by $-\gamma$.
	
	Applying l'H\^opital's rule to 
	\(
	\lim_{\gamma \to 0}\rho_{\gamma}(L, p)
	\), we obtain 
	\(\lim_{\gamma \to 0} \frac{\sum_{i=1}^M L_i p_i \exp (\gamma L_i)}{\sum_{i=1}^M p_i \exp(\gamma L_i)} = \sum_{i=1}^M L_i p_i\),
    which establishes \cref{item:o0,item:p0}.

	To establish \cref{item:poo}, let $\underline{p} \dfn \min \{ p_i : p_i > 0, 1 \leq i \leq M \} \in (0, 1]$. Note 
    that $\underline{p}$ must exist since $p_i$ are nonnegative and sum to 1.
    Using this, we have the inequalities
	$\tfrac{1}{\gamma} (\log \underline{p} + \lse(\gamma L)) \leq \rho_{\gamma}(L, p) \leq \frac{1}{\gamma}\lse(\gamma L)$, for $\gamma > 0$.
    Taking the limit $\gamma \to \infty$ on all sides of the inequality yields
    $\vecmax L \leq \lim_{\gamma \to \infty} \rho_{\gamma}(L, p) \leq \vecmax L.$
    Finally, \cref{item:ooo} is shown analogously. 
\end{proof}

From \cref{lem:limits}, it follows that as $\gamma$ increases, 
problems \eqref{eq: optimistic_ocp} and $\eqref{eq: pessimistic_ocp}$ 
more closely approximate the two-player games
\[ 
        \minimize_{\xu \in \C(x_t)} \min_{s \in \nodes(N)} L_s(\xu)\text{, }
         \minimize_{\xu \in \C(x_t)} \max_{s \in \nodes(N)} L_s(\xu),
\]
respectively, where in the former case, the controller 
takes the optimistic assumption that the uncertain environment will act cooperatively,
whereas in the latter, it assumes an adversarial environment.

\begin{remark}\label{rmk: convergence}
    Note that \cref{lem:limits} only provided point-wise convergence w.r.t. $\gamma$.
    Under some mild regularity assumptions, epigraphical 
    versions of these limits can also be shown, which provide a more complete justification 
    for this interpretation \cite[Ch. 7]{rockafellar2009variational}.
\end{remark}

\section{Sequential Convexification}\label{sec: mm_solver}
The problems \eqref{eq: optimistic_ocp} and \eqref{eq: pessimistic_ocp} are nonconvex problems.
To solve them efficiently, we propose a sequential convexification scheme based on the MM principle \cite{lange2016mm},
where we iteratively construct a convex 
surrogate function $\surrogate^m$ at the current iterate $(\ve{x}^m, \ve{u}^m) \in \mathbb{C}(x_t)$, satisfying
\begin{subequations}\label{eq: property_Q}
	\begin{align}
		\surrogate^m(\ve{x}, \ve{u})     \geq & \; \loss(\ve{x}, \ve{u}), \, \forall (\ve{x}, \ve{u}) \in \mathbb{C}(x_t),  \label{eq: Q_pos} \\
		\surrogate^m(\ve{x}^m, \ve{u}^m) =    & \; \loss(\ve{x}^m, \ve{u}^m) \label{eq: Q_equal_func}.
	\end{align}
\end{subequations}
which is minimized to yield the next iterate $(\ve{x}^{m+1}, \ve{u}^{m+1})$.

We show in the following proposition that a surrogate function satisfying \eqref{eq: Q_pos} for problem \eqref{eq: optimistic_ocp} can be obtained by applying Jensen's inequality.

\begin{proposition}\label{lem: surrogate_optimistic}
    Let $\Pi \in \Delta^{d^N}$ be an arbitrary distribution, and 
	\rev[def_kl_div]{}{$\kl{\Pi}{P} = \sum_{i} \Pi_i (\ln \Pi_i - \ln P_i)$ denote the Kullback-Leibler (KL) divergence, we}
	define
    \begin{equation}  \label{eq: def-Q-opti}
        \surrogate^{\mathrm{o}}\!(\ve{x}, \ve{u} \midsc \Pi) 
            \dfn \tfrac{1}{\gamma}\kl{\Pi}{P(\ve{x})} 
            {+} \Pi^\top\! L(\ve{x}, \ve{u}).
    \end{equation}
	For all $\ve{x}, \ve{u}$, we have
    $\loss^{\mathrm{o}}(\ve{x}, \ve{u}) \leq \surrogate^{\opti}(\ve{x}, \ve{u} \midsc \Pi)$.
\end{proposition}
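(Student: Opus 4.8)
The plan is to recognize the surrogate $\surrogate^{\opti}(\cdot \midsc \Pi)$ as the upper bound furnished by the Gibbs (Donsker--Varadhan) variational representation of the entropic risk, and to obtain it directly from Jensen's inequality, exactly as the statement hints. As in the proof of \cref{lem:limits}, I would first assume $P(\ve{x}) \in \interior\Delta^{d^N}$ and $\Pi \in \interior\Delta^{d^N}$ so that the ratios appearing below are well defined; the components where $P_s(\ve{x})=0$ or $\Pi_s=0$ are then handled by the usual KL conventions ($0\ln 0 = 0$) or by a routine limiting argument.

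\textbf{Step 1 (importance reweighting).} Rewrite the expectation taken under the decision-dependent distribution $P(\ve{x})$ as an expectation under $\Pi$:
\[
  \E_{P(\ve{x})}\bigl[\exp(-\gamma L(\ve{x},\ve{u}))\bigr]
  = \sum_{s\in\nodes(N)} \Pi_s\,\frac{P_s(\ve{x})}{\Pi_s}\exp\bigl(-\gamma L_s(\ve{x},\ve{u})\bigr)
  = \E_{\Pi}\Bigl[\tfrac{P(\ve{x})}{\Pi}\exp(-\gamma L(\ve{x},\ve{u}))\Bigr].
\]
\textbf{Step 2 (Jensen).} Since $\ln$ is concave, Jensen's inequality applied to the $\Pi$-expectation gives
\[
  \ln \E_{\Pi}\Bigl[\tfrac{P(\ve{x})}{\Pi}\exp(-\gamma L)\Bigr]
  \;\geq\; \E_{\Pi}\Bigl[\ln\tfrac{P(\ve{x})}{\Pi} - \gamma L\Bigr]
  \;=\; -\kl{\Pi}{P(\ve{x})} \;-\; \gamma\,\Pi^\top L(\ve{x},\ve{u}).
\]
\textbf{Step 3 (sign bookkeeping).} Multiplying through by $-1/\gamma<0$ reverses the inequality and yields
\[
  \loss^{\opti}(\ve{x},\ve{u}) = -\tfrac1\gamma\ln \E_{P(\ve{x})}\bigl[\exp(-\gamma L)\bigr]
  \;\leq\; \tfrac1\gamma\kl{\Pi}{P(\ve{x})} + \Pi^\top L(\ve{x},\ve{u})
  = \surrogate^{\opti}(\ve{x},\ve{u}\midsc\Pi),
\]
which is the claim.

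The calculation is short and I do not expect a genuine obstacle, only two points requiring care. The first is the sign bookkeeping in Step 3: because $\gamma>0$, the outer factor $-1/\gamma$ flips Jensen's inequality into an \emph{upper} bound — consistent with the second inequality $\loss^{\opti}\le \E_{P(\ve{x})}[L]$ of \cref{lem:limits}, which is recovered here by taking $\Pi=P(\ve{x})$, so that $\kl{P(\ve{x})}{P(\ve{x})}=0$. The second is the degenerate support case just mentioned. Finally, for the MM scheme it is worth noting that equality holds precisely when $\Pi$ is the Gibbs-tilted distribution $\Pi_s \propto P_s(\ve{x})\exp(-\gamma L_s(\ve{x},\ve{u}))$; evaluating this at the current iterate $(\ve{x}^m,\ve{u}^m)$ is the choice that enforces the tightness condition \eqref{eq: Q_equal_func}.
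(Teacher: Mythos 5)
Your proof is correct and takes essentially the same route as the paper's: the importance-reweighting by $\Pi_s/\Pi_s$ followed by Jensen's inequality on the logarithm is exactly the paper's argument, with the only cosmetic difference that you track the sign of $-1/\gamma$ at the end rather than folding it into the convexity of $x \mapsto -\ln(x)$. Your closing remark identifying the tight choice of $\Pi$ as the Gibbs-tilted distribution matches the closed form in \cref{lem: solution_optimistic_majorization}.
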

\begin{proof}
	Recalling the definition \eqref{eq: optimistic_ocp}
	\rev{}{and noting that $x \mapsto -\ln(x)$ is convex},
	we expand 
	\begin{align*}
		\loss^\opti(\ve{x}, \ve{u}) 
		=&\; -\tfrac{1}{\gamma}\ln \Big[
				\tlsum_{s \in \nodes(N)} \frac{\Pi_{s}}{\Pi_{s}} p(s\midsc \ve{x}) \exp(- \gamma L_{s}(\ve{x}, \ve{u}))
			\Big] \\
		\leq &\; - \tfrac{1}{\gamma}\tlsum_{s \in \nodes(N)} \Pi_{s} \ln \big[ 
				\frac{p(s\midsc \ve{x})}{\Pi_{s}} \exp(- \gamma L_{s}(\ve{x}, \ve{u}))
			\big] \\
		= &\; - \tfrac{1}{\gamma} \tlsum_{s \in \nodes(N)} \Pi_{s} \ln \big[ 
			\frac{p(s\midsc \ve{x})}{\Pi_{s}}
			\big] + \smashoperator{\tlsum_{s \in \nodes(N)}} \Pi_{s} 
				L_{s}(\ve{x}, \ve{u}),
	\end{align*}		
    where the inequality follows the Jensen's inequality.
	The claim follows by definition of the KL divergence.
\end{proof}

Finding $\Pi$ satisfying \eqref{eq: Q_equal_func} is less straightforward.
Since $\surrogate^{\opti}(\cdot, \cdot\;; \Pi)$ upper bounds the 
cost for any $\Pi$, a reasonable estimate is to find the smallest such upper bound:
\begin{equation}
    \Pi^{\star} = 
        \textstyle{\argmin_{\Pi \in \Delta^{d^N}}} \surrogate(\ve{x}^m, \ve{u}^m \midsc \Pi). \label{eq: majorization_optimistic}
\end{equation}
Particularly,
this convex problem has a closed-form solution.
\begin{lemma}\label{lem: solution_optimistic_majorization}
	The solution of problem \eqref{eq: majorization_optimistic} is
	\begin{equation}\label{eq: sol_majorization_optimistic}
		\Pi^\star_{\rev{\ve{\xi}}{s}} = \tfrac{\exp\big(
			\ln p(s\midsc \ve{x}^m) - \gamma L_{s}(\ve{x}^m, \ve{u}^m)
		\big)}{
			\sum_{s'\in\nodes(N)} \exp\big(
				\ln p(s'\midsc \ve{x}^m) - \gamma L_{s'}(\ve{x}^m, \ve{u}^m)
			\big)
		}.
	\end{equation}
\end{lemma}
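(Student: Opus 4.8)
\emph{Proof proposal.} The plan is to treat \eqref{eq: majorization_optimistic} as a strictly convex program over the probability simplex and solve it through its optimality (KKT) conditions. First I would note that the map $\Pi \mapsto \surrogate^{\opti}(\ve{x}^m, \ve{u}^m \midsc \Pi)$ in \eqref{eq: def-Q-opti} is strictly convex on $\Delta^{d^N}$: the term $\Pi^\top L(\ve{x}^m, \ve{u}^m)$ is linear, while $\tfrac1\gamma \kl{\Pi}{P(\ve{x}^m)} = \tfrac1\gamma \sum_{s \in \nodes(N)} \Pi_s (\ln \Pi_s - \ln p(s\midsc\ve{x}^m))$ is $\tfrac1\gamma$ times the strictly convex negative-entropy function plus a linear term — and it is finite and smooth on the interior because every component $p(s\midsc\ve{x}^m)$ of $P(\ve{x}^m)$ is strictly positive, which the softmax structure in \eqref{eq: prob_scenario} guarantees. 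Hence \eqref{eq: majorization_optimistic} has a unique minimizer; moreover, since $\partial_{\Pi_s}\surrogate^{\opti} = \tfrac1\gamma(\ln\Pi_s + 1 - \ln p(s\midsc\ve{x}^m)) + L_s(\ve{x}^m,\ve{u}^m) \to -\infty$ as $\Pi_s \to 0^+$, the minimizer lies in $\interior\Delta^{d^N}$ and the nonnegativity constraints are inactive.

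Next I would form the Lagrangian associated with the single affine constraint $\sum_{s\in\nodes(N)}\Pi_s = 1$,
\[
    \mathcal{K}(\Pi, \lambda) = \tfrac1\gamma \smashoperator{\sum_{s \in \nodes(N)}} \Pi_s\big(\ln\Pi_s - \ln p(s\midsc\ve{x}^m)\big) + \smashoperator{\sum_{s \in \nodes(N)}} \Pi_s L_s(\ve{x}^m, \ve{u}^m) + \lambda\Big(\smashoperator{\sum_{s \in \nodes(N)}} \Pi_s - 1\Big),
\]
and impose stationarity $\partial_{\Pi_s}\mathcal{K} = 0$, which gives $\tfrac1\gamma(\ln\Pi_s + 1 - \ln p(s\midsc\ve{x}^m)) + L_s(\ve{x}^m,\ve{u}^m) + \lambda = 0$, i.e.\ $\ln\Pi_s = \ln p(s\midsc\ve{x}^m) - \gamma L_s(\ve{x}^m,\ve{u}^m) - \gamma\lambda - 1$. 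Exponentiating, $\Pi_s = \exp\big(\ln p(s\midsc\ve{x}^m) - \gamma L_s(\ve{x}^m,\ve{u}^m)\big)\,\exp(-\gamma\lambda - 1)$; enforcing $\sum_{s}\Pi_s = 1$ then determines the remaining constant as $\exp(-\gamma\lambda - 1) = 1/\sum_{s'\in\nodes(N)}\exp(\ln p(s'\midsc\ve{x}^m) - \gamma L_{s'}(\ve{x}^m,\ve{u}^m))$, which is exactly \eqref{eq: sol_majorization_optimistic}. Since the problem is convex and Slater's condition holds trivially (the constraint is affine, the simplex has nonempty relative interior), this stationary point is the global minimizer, establishing the claim.

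There is no genuine obstacle here; the only steps that require a word of care are (i) the finiteness and differentiability of the KL term, which hinges on $P(\ve{x}^m) \in \interior\Delta^{d^N}$ — true because $\sigma$ takes values in the relative interior of the simplex — and (ii) the argument that the simplex boundary does not bind, so that sign-unconstrained stationarity suffices. Point (ii) can either be handled via the entropy's barrier-like gradient behaviour as above, or sidestepped entirely by simply observing that the candidate $\Pi^\star$ in \eqref{eq: sol_majorization_optimistic} manifestly lies in $\interior\Delta^{d^N}$ and satisfies the KKT system with all inequality multipliers equal to zero, after which convexity yields global optimality.
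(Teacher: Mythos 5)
Your proposal is correct and follows essentially the same route as the paper: both treat \eqref{eq: majorization_optimistic} as a convex program over the simplex, write the Lagrangian, solve the stationarity condition for $\Pi_s$, and use the normalization constraint to determine the multiplier. The only cosmetic difference is that the paper keeps the nonnegativity multipliers $\lambda_s$ and deduces $\lambda_s=0$ from complementary slackness once $\Pi_s>0$ is evident, whereas you argue up front (via the entropy's boundary behaviour, or by direct verification of the candidate) that these constraints are inactive.
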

\begin{proof}
	Since $(\ve{x}^m, \ve{u}^m)$ are constants, we will
use $P$ and $L$ to denote $P(\ve{x}^m)$, $L(\ve{x}^m, \ve{u}^m)$, respectively.
Problem \eqref{eq: majorization_optimistic} is equivalent to
\begin{alignat}{3}
	\argmin_{\Pi} & \; & \tfrac{1}{\gamma} \kl{\Pi}{P} + & \; \Pi^\top L, \label{eq: majorization_equivalent} \\
	\stt          & \; & \Pi^\top\mathbf{1}_{d^N}  =     & \; 1, \quad
	\Pi_{s} \geq 0 \quad \forall s\in\nodes(N), \nonumber
\end{alignat}
As KL divergence is convex w.r.t. both arguments, the problem \eqref{eq: majorization_equivalent} is convex.
Define Lagrangian
\[
	\mathscr{L}(\Pi, \mu, \ve{\lambda}) \dfn
	\tfrac{1}{\gamma} \kl{\Pi}{P} + \Pi^\top L
	-  \ve{\lambda}^\top \Pi
	- \mu \big(1 - \Pi^\top\mathbf{1}_{d^N}\big).
\]
The KKT conditions of problem \eqref{eq: majorization_equivalent} yield:
\begin{subequations}
	\begin{align*}
		\nabla_{\Pi_{s}}\!\mathscr{L}(\Pi, \mu, \ve{\lambda})
		=  \tfrac{1}{\gamma}(\ln \Pi_{s} {+} 1 - \ln P_{s}) + L_{s} - \lambda_{s} + & \; \mu =  0, \\
		\lambda_{s}\Pi_{s} = 0,        \,
		\lambda_{s} 			  \geq 0,           \,
		\Pi_{s} 				  \geq 0,           \,
		\Pi^\top\mathbf{1}_{d^N} = 1, \,
		\forall s\in                                                                & \;\nodes(N).
	\end{align*}
\end{subequations}
The first equation is equivalent to
\begin{equation}\label{eq: sol_pi_intermediate}
	\Pi_{s} = \exp\big(
	-1 + \ln P_{s} + \gamma (\lambda_{s} - L_{s} - \mu)
	\big).
\end{equation}
Thus, $\Pi_{s} > 0$ for all $s\in\nodes(N)$ and $\lambda_{s} = 0$.
Because
\[
	\smashoperator{\tlsum_{s \in \nodes(N)}} \Pi_{s}
	= \exp(-\gamma \mu)\smashoperator{\tlsum_{s \in \nodes(N)}} \exp\big(
	-1 + \ln P_{s} - \gamma L_{s}
	\big) = 1,
\]

\vspace*{1mm}%
\noindent it follows that
\(
\mu = \tfrac{1}{\gamma} \ln \big(\smashoperator{\tlsum_{s\in\nodes(N)}} \exp(
-1 + \ln P_{s} - \gamma L_{s}
)\big).
\)
Plugging the expression into \eqref{eq: sol_pi_intermediate}, we obtain
\[
	\Pi_{s} = \tfrac{\exp\big(
		-1 + \ln P_{s} - \gamma L_{s}
		\big)}{\sum_{s'\in\nodes(N)} \exp\big(
		-1 + \ln P_{s'} - \gamma L_{s'}
		\big)
	},
\]
which is equivalent to \eqref{eq: sol_majorization_optimistic}.

\end{proof}

Plugging \eqref{eq: sol_majorization_optimistic} into the definition of $\surrogate^{\opti}$ \eqref{eq: def-Q-opti}, 
it is straightforward to verify that for this value of $\Pi$, $\surrogate^{\opti}$ satisfies \eqref{eq: Q_equal_func}.
To summarize, instead of solving \eqref{eq: optimistic_ocp} directly, we solve 
\rev{
\begin{alignat*}{3}
	(\ve{x}^{m+1}, \ve{u}^{m+1})\leftarrow &\;
	\textstyle{\argmin_{(\ve{x}, \ve{u})\in \mathbb{C}(x_t)}}\surrogate^{\mathrm{o}}(\ve{x}, \ve{u} \midsc \Pi^m)
\end{alignat*}
}{}
\usetagform{colorednumberequation}
\begin{equation}\label{eq: optimistic_surrogate}
	(\ve{x}^{m+1}, \ve{u}^{m+1})\leftarrow
	\textstyle{\argmin_{(\ve{x}, \ve{u})\in \mathbb{C}(x_t)}}\surrogate^{\mathrm{o}}(\ve{x}, \ve{u} \midsc \Pi^m)
\end{equation}
\usetagform{default}
at each iteration $m$, where the parameter $\Pi^m$ is computed via \eqref{eq: sol_majorization_optimistic} using the current iterate $(\ve{x}^m, \ve{u}^m)$.

\rev{Since the derivation in \cref{lem: surrogate_optimistic} relies on the convexity of function $x \mapsto -\ln(x)$,
this approach is not applicable for the pessimistic formulation.}{
	\cref{lem: surrogate_optimistic} relies on the convexity of function $x \mapsto -\ln(x)$ to derive the surrogate function,
	which is not applicable for the pessimistic formulation.
}
The following proposition provides an alternative
surrogate function satisfying \eqref{eq: property_Q} for problem \eqref{eq: pessimistic_ocp}.
\begin{proposition}\label{lem: surrogate_pessimistic}
	Let $\ve{\tilde{x}} = \{\nodevar{\tilde{x}}{\iota}\}_{\iota\in\nodes(0, N)}$,
	with $\nodevar{\tilde{x}}{\iota} \in \mathbb{X}$ chosen arbitrarily
    for all $\iota\in\nodes(0, N)$,
	and let
	\[
		\surrogate^{\pess}(\ve{x}, \ve{u} \midsc \ve{\tilde{x}})
		\dfn
        \tfrac{1}{\gamma}\lse \big[
			\hat{P} (\ve{x}\midsc \ve{\tilde{x}})
			+ \gamma  L(\ve{x}, \ve{u})\big],
	\]
	where the vector $\hat{P} (\ve{x}\midsc \ve{\tilde{x}}) \in \re^{d^{N}}$ has entries
	$\hat{P}_{s} (\ve{x}\midsc \ve{\tilde{x}}) {\dfn}
		\rho(\ve{\xi}_{s},\rev{\Theta^\top \ve{x}_{s}}{\Theta \ve{x}_{s}} \midsc \rev{\Theta^\top \ve{\tilde{x}}_{s}}{\Theta \ve{\tilde{x}}_{s}})$, 
        for 
        $s \in \nodes(N)$.
	Here,
	$\ve{\xi}_{s} \dfn \{\nodevar{\xi}{\nextnode{\iota}}\}_{\iota\in \scenanc(s)}$
	is the mode realization of scenario $s$,
	$\ve{x}_s \dfn \{\nodevar{x}{\iota}\}_{\iota\in \scenanc(s)}$
	represents the states along the scenario,
	and $\rho: \Xi^{N} \times \re^{d\times N} \to \re$ is defined as
	\begin{equation}\label{eq: rho}
		\rho (i,\, Z \midsc \tilde{Z})
		\dfn \tlsum_{t=0}^{N-1} \tilde{Z}_{i_t, t}
		- \lse(\tilde{Z}_t)
		- \sigma^\top(\tilde{Z}_t) (Z_t - \tilde{Z}_t).
	\end{equation}

    For all $\ve{x}, \ve{u}$ we have 
    \(
		\loss^\mathrm{p}(\ve{x}, \ve{u})
		\leq \surrogate^{\mathrm{p}}(\ve{x}, \ve{u} \midsc \ve{\tilde{x}})
    \).
    If, furthermore, $\ve{\tilde{x}} = \ve{x}$, this holds with equality.

\end{proposition}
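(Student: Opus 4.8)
The plan is to recognize $\surrogate^{\pess}$ as the result of replacing, inside the exponent defining $\loss^{\pess}$, each concave log-softmax term by its first-order Taylor expansion at $\ve{\tilde{x}}$, and to note that this majorization is exact because a tangent touches the function it linearizes. The first step is to put $\loss^{\pess}$ into a directly comparable form: writing $p(s\midsc\ve x)=\exp\big(\ln p(s\midsc\ve x)\big)$ and absorbing it into the exponential gives
\[
    \loss^{\pess}(\ve x,\ve u)
    =\tfrac1\gamma\ln\Big(\tlsum_{s\in\nodes(N)}\exp\big(\ln p(s\midsc\ve x)+\gamma L_s(\ve x,\ve u)\big)\Big)
    =\tfrac1\gamma\lse\big(\ln P(\ve x)+\gamma L(\ve x,\ve u)\big),
\]
where $\ln P(\ve x)$ denotes the vector with entries $\ln p(s\midsc\ve x)$. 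Since $\lse$ is nondecreasing in each coordinate and $\gamma>0$, the map $z\mapsto\tfrac1\gamma\lse\big(z+\gamma L(\ve x,\ve u)\big)$ is monotone, so it suffices to prove the scenariowise bound $\ln p(s\midsc\ve x)\le\hat{P}_s(\ve x\midsc\ve{\tilde{x}})$ for each $s\in\nodes(N)$; applying this monotone map to both sides then yields $\loss^{\pess}\le\surrogate^{\pess}$. (The same ingredients also give convexity of $\surrogate^{\pess}$, which is what makes it a usable surrogate: each $\hat{P}_s(\,\cdot\,\midsc\ve{\tilde{x}})$ is affine in $\ve x$, $\gamma L_s$ is convex, and $\lse$ is convex and nondecreasing, so the composition is convex.)

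To prove the scenariowise bound I would start from \eqref{eq: prob_scenario}, $\ln p(s\midsc\ve x)=\tlsum_{\iota\in\scenanc(s)}\ln\sigma_{\nodevar{\xi}{\nextnode{\iota}}}(\Theta\nodevar{x}{\iota})$, and use the identity $\ln\sigma_j(z)=z_j-\lse(z)$, which splits each summand into a term linear in $\ve x$ and the concave log-partition term $-\lse(\Theta\nodevar{x}{\iota})$. The latter is the only non-affine ingredient, and I would majorize it by its tangent at $\ve{\tilde{x}}$: by convexity of $\lse$ and $\nabla\lse=\sigma$, $\lse(\Theta\nodevar{x}{\iota})\ge\lse(\Theta\nodevar{\tilde{x}}{\iota})+\sigma(\Theta\nodevar{\tilde{x}}{\iota})^{\top}\Theta(\nodevar{x}{\iota}-\nodevar{\tilde{x}}{\iota})$, so $-\lse(\Theta\nodevar{x}{\iota})$ is upper bounded by the affine function $-\lse(\Theta\nodevar{\tilde{x}}{\iota})-\sigma(\Theta\nodevar{\tilde{x}}{\iota})^{\top}\Theta(\nodevar{x}{\iota}-\nodevar{\tilde{x}}{\iota})$ of $\ve x$. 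Substituting this bound into each summand and collecting the result over the $N$ nodes of $\scenanc(s)$ (relabelled by a stage index $t\in\{0,\dots,N-1\}$) gives $\rho(\ve{\xi}_s,\Theta\ve{x}_s\midsc\Theta\ve{\tilde{x}}_s)=\hat{P}_s(\ve x\midsc\ve{\tilde{x}})$ as in \eqref{eq: rho}, which is the desired scenariowise bound.

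For the equality statement I would set $\ve{\tilde{x}}=\ve x$: the slack in the per-node bound used above is exactly the Bregman divergence of $\lse$ between $\Theta\nodevar{x}{\iota}$ and $\Theta\nodevar{\tilde{x}}{\iota}$, which vanishes when the two arguments coincide. Hence $\hat{P}_s(\ve x\midsc\ve x)=\ln p(s\midsc\ve x)$ for every $s$, and therefore $\surrogate^{\pess}(\ve x,\ve u\midsc\ve x)=\tfrac1\gamma\lse\big(\ln P(\ve x)+\gamma L(\ve x,\ve u)\big)=\loss^{\pess}(\ve x,\ve u)$.

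I expect the main obstacle to be organizational rather than conceptual: lining up the scenario-tree bookkeeping --- the product over $\scenanc(s)$, the child map $\nextnode{\iota}$, and the $d\times N$ matrix arguments $\Theta\ve{x}_s$, $\Theta\ve{\tilde{x}}_s$ of $\rho$ --- with the stagewise sum in \eqref{eq: rho}, and keeping the inequality directions straight: here we are majorizing a sum of \emph{concave} terms, so it is tangents (which lie \emph{above} a concave function), not secants, that are used, and it is essential that $\gamma>0$ for $\tfrac1\gamma\lse(\,\cdot\,)$ to be monotone and hence to transmit the scenariowise bounds to $\surrogate^{\pess}$.
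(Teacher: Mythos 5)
Your proposal is correct and follows essentially the same route as the paper: rewrite $\loss^{\pess}$ as $\tfrac{1}{\gamma}\lse(\ln P(\ve{x})+\gamma L)$, use $\ln\sigma_j(z)=z_j-\lse(z)$, majorize $-\lse$ by its tangent at $\ve{\tilde{x}}$ via the gradient inequality for the convex function $\lse$, and propagate the elementwise bound through the monotone $\lse$. Your explicit verification of the equality case (vanishing Bregman divergence) and the convexity remark are welcome additions that the paper leaves implicit.
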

\begin{proof}
	Function $\loss^{\mathrm{p}}$ defined in \eqref{eq: pessimistic_ocp}
	can be written as
	\begin{equation*}
		\begin{aligned}
			\loss^{\mathrm{p}}(\ve{x}, \ve{u})
			= & \;
			\tfrac{1}{\gamma}\ln \big[
				\tlsum_{s\in\nodes(N)} \exp \big(
				\ln p(s\midsc \ve{x}) + \gamma L_{s}(\ve{x}, \ve{u})
				\big)\big],
			\\
			= & \;
			\tfrac{1}{\gamma}\lse \big(
			\ln P(\ve{x}) + \gamma L(\ve{x}, \ve{u})
			\big).
		\end{aligned}
	\end{equation*}
	By definition \eqref{eq: prob_scenario}, we rewrite
	the log-probability
	\begin{align}\label{eq_prof: log_prob_scen}
		\ln p(s\midsc \ve{x})
		{=} \smashoperator{\tlsum_{\iota\in\scenanc(s)}} \ln \sigma_{\nodevar{\xi}{\rev{\nextnode{\iota}}{\mathlarger{\nextnode{\iota}}}}}\big(\rev{\Theta^\top\! \nodevar{x}{\iota}}{\Theta \nodevar{x}{\iota}} \big)
		{=} \smashoperator{\tlsum_{\iota\in\scenanc(s)}} \rev{
			\Theta_{\nodevar{\xi}{\nextnode{\iota}}}^\top \nodevar{x}{\iota}
		}{ 
			\Theta_{\nodevar{\xi}{\rev{\nextnode{\iota}}{\mathlarger{\nextnode{\iota}}}}}\nodevar{x}{\iota}
		} {-} \lse\big(\rev{\Theta^\top\! \nodevar{x}{\iota}}{\Theta \nodevar{x}{\iota}} \big),
	\end{align}
	where the second equality follows by definition of the softmax function $\sigma$.
	Since $\lse$ is convex and differentiable,
    it satisfies
	$\lse(x) \geq \lse(y) + \sigma^\top(y) (x - y)$.
	Applying this inequality to \eqref{eq_prof: log_prob_scen},
	it follows that
	\(
	\ln p(s\midsc \ve{x})
	\leq \rho(\ve{\xi}_{s},\, \rev[theta_trans_pessimistic]{\Theta^\top \ve{x}_{s}}{\Theta \ve{x}_{s}} \midsc \rev{\Theta^\top \ve{\tilde{x}}_{s}}{\Theta \ve{\tilde{x}}_{s}}).
	\)
	Consequently, $\ln P(\ve{x}) \leq \hat{P}(\ve{x}\midsc \ve{\tilde{x}})$ element-wise.
	Since $\lse$ is monontonically increasing,
	the inequality implies that
	\(
	\loss^\mathrm{p}(\ve{x}, \ve{u})
	\leq \tfrac{1}{\gamma}\lse \big(
	\hat{P}(\ve{x} \midsc \ve{\tilde{x}}) + \gamma L(\ve{x}, \ve{u})
	\big).
	\)
\end{proof}

To summarize, instead of solving \eqref{eq: pessimistic_ocp} directly,
we solve
\begin{alignat}{3}
	(\ve{x}^{m+1}, \ve{u}^{m+1})\leftarrow & \;
	\textstyle{\argmin_{(\ve{x}, \ve{u})\in \mathbb{C}(x_t)}}\surrogate^{\mathrm{p}}(\ve{x}, \ve{u} \midsc \ve{x}^m)
	\label{eq: pessimistic_surrogate}           \\
	\dfn                                   & \;
	\tfrac{1}{\gamma}\lse\big[
		\hat{P} (\ve{x}\midsc \ve{x}^m)
		+ \gamma  L(\ve{x}, \ve{u})\big]
	\nonumber ,
\end{alignat}
where each entry $\hat{P}_{s} (\ve{x}\midsc \ve{x}^m) \dfn
	\rho(\ve{\xi}_{s},\, \rev{\Theta^\top \ve{x}_{s}}{\Theta \ve{x}_{s}} \midsc \rev{\Theta^\top \ve{x}^m_{s}}{\Theta \ve{x}^m_{s}})$ with $\rho$ defined in \eqref{eq: rho}.

\section{Numerical Experiment}\label{sec: numerical_experiment}
We compare the performance of different formulations using a corridor scenario
where a robot and a human approach from opposite directions.
The robot is represented by discrete-time double integrators (in $x$ and $y$)
with state $x^{\mathrm{bot}}_k = \bmat{p_{x, k} & p_{y, k} & v_{x, k} & v_{y, k}}^\top$,
where $p_x, p_y, v_x, v_y$ denote the position and the velocity in the $x$ and $y$ directions, respectively.
The sample time is $\Delta t = \SI{0.1}{\second}$.
The robot controls its acceleration $u \in \re^2$ in both directions.
For simplicity, we assume the human (with state $x_k^{\mathrm{h}} = \bmat{p_{x, k}^{\mathrm{h}} & p_{y, k}^{\mathrm{h}}}$)
maintains a constant $x$-velocity $v^\mathrm{h}_x = \SI{ -0.8}{\meter / \second}$.
The $y$-velocity 
is controlled by 
$v^{h}_{y, k} = -0.3(p^{h}_{y, k} - y^{\mathrm{ref}}_{\xi})$,
to track a reference position $y^{\mathrm{ref}}_{\xi}$, 
taking values 
$y^{\mathrm{ref}}_1 = \SI{0}{\meter}$, 
$y^{\mathrm{ref}}_2 = \SI{-1}{\meter}$,
and
$y^{\mathrm{ref}}_3 = \SI{1}{\meter}$, depending on $\xi$.
The distribution of $\xi$ depends on the relative distance:
\begin{equation*}
    \xi \sim \sigma (\rev{\Theta^\top}{\Theta} \bsmat{
        p_x - p^{\mathrm{h}}_x \\
        p_y - p^{\mathrm{h}}_y \\
        1
    }),
    \quad 
    \text{with}
    \,
    \Theta = \rev[theta_trans_experiment]{\bsmat{
        -5 &  0 & -12.5 \\
        -1 &  1 & 0     \\
        -1 & -1 & 0     
    }}{
        \bsmat{
        -5 &         -1 &         -1 \\ 
         0 &          1 &         -1 \\        
         -12.5 &      0 &          0   
        }
    }.
\end{equation*}
We define the joint state $x_k \dfn \bmat{x_k^{\mathrm{bot}\, \top} & x_k^{\mathrm{h}\, \top} }^\top \in \re^6$.
The robot aims to move forward as fast as possible while avoiding collision either with the wall or the human.
To this end,
the robot tracks the reference 
$x_k^{\mathrm{ref}} = \bmat{p_{x, t} + \rev{}{k}v^{\max}_{x} \rev{(t + k}{}\Delta t\rev{)}{} & \!0\! & \!v^{\max}_{x}\! & \!0\!}$
using cost functions:
\[
    \begin{aligned}
    \ell^{\mathrm{track}}(x, u) 
    &= \tfrac{1}{2}\norm{x^{\mathrm{bot}} {-} x^{\mathrm{ref}}}^2_{Q} {+} \tfrac{1}{2}\norm{u}^2_{R},\\
    \text{and} \quad \ell_N^{\mathrm{track}}(x_k) 
    &= \tfrac{1}{2}\norm{x^{\mathrm{bot}} {-} x^{\mathrm{ref}}}^2_{\Qf},
    \end{aligned}
\]
and $Q {\dfn} \diag{50, \!50,\! 2, \!2}$, $R {\dfn} \diag{2, \!2}$, and $\Qf {\dfn} 5 Q$.
To avoid colliding with the human,
we introduce a penalty:
\begin{equation}\label{eq: collision_penalty}
    c(x) = \alpha \exp(- \beta \norm{\bsmat{
        p_x - p^{\mathrm{h}}_x \\
        p_y - p^{\mathrm{h}}_y
    }})
\end{equation}
with $\alpha = 500$ and $\beta = 5$.
Thus, the stage and the terminal cost are given by 
\(
    \ell(x_k, u_k) = \ell^{\mathrm{track}}(x_k, u_k) + c(x_k),
    \ell_N(x_N) = \ell_N^{\mathrm{track}}(x_N) + c(x_N).
\)
Finally, $\mathbb{X}$ and $\mathbb{U}$ are defined by
the box constraints $\bmat{-1 & -0.6}^\top \leq u \leq \bmat{1 & 0.6}^\top$,
$p_y \in [-1.5, 1.5]$, $v_x \in [0, 1.5]$, and $v_y \in [-1, 1]$.

\subsection{Upper Bound of Collision Penalty}\label{sec: collision_penalty}
The stage and terminal costs are nonconvex due to the collision penalty term \eqref{eq: collision_penalty}.
To apply our proposed formulation, 
we derive a convex upper bound for this collision penalty.

\begin{lemma}\label{lem: collision_penalty_upper_bound}
    Consider a function $f: \re^n \to \re$ defined as $f(x) = f_1(f_2(x))$, where:
    \begin{itemize}
        \item $f_1: \re \to \re$ is convex, monotonically decreasing,
        \item $f_2: \re^n \to \re$ is convex.
    \end{itemize}
    For any $\tilde{x} \in \re^n$ and $w \in \partial f_2(\tilde{x})$,
    \rev[subdifferential]{}{where $\partial f_2(\tilde{x})$ is the subdifferential of $f_2$ at $\tilde{x}$ \cite[\S 23]{rockafellar1997convex},}
    the function $\hat{f}(x \midsc w) = f_1(f_2(\tilde{x}) + w^\top (x - \tilde{x}))$
    is convex and satisfies:
    \begin{inlinelist}
    \item \label{item:ineq} \(
        \hat{f}(x \midsc w) \geq f(x)
        \) 
        for all $x \in \re^n$, 
    \item  \label{item:eq}\(
            \hat{f}(\tilde{x} \midsc w) = f(\tilde{x}).
        \)
    \end{inlinelist}
\end{lemma}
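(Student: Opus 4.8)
The plan is to establish the three assertions one at a time, each following from a single elementary fact. Since $\hat f(\cdot \midsc w)$ is the composition of the convex function $f_1$ with the affine map $x \mapsto f_2(\tilde x) + w^\top(x - \tilde x)$, it is convex; this handles the convexity claim and requires no further argument. The remaining work is to compare $\hat f$ with $f$ and to check the tightness at $\tilde x$.

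For the inequality \ref{item:ineq}, the key step is the subgradient inequality: because $w \in \partial f_2(\tilde x)$, we have $f_2(x) \ge f_2(\tilde x) + w^\top (x - \tilde x)$ for every $x \in \re^n$. Applying the monotonically decreasing function $f_1$ to both sides reverses the inequality, giving
\[
    f(x) = f_1\big(f_2(x)\big) \le f_1\big(f_2(\tilde x) + w^\top(x - \tilde x)\big) = \hat f(x \midsc w),
\]
which is exactly \ref{item:ineq}; here the hypothesis that $f_1$ is defined on all of $\re$ ensures the right-hand side is meaningful for every $x$. For \ref{item:eq}, evaluating at $x = \tilde x$ makes the term $w^\top(x - \tilde x)$ vanish, so $\hat f(\tilde x \midsc w) = f_1(f_2(\tilde x)) = f(\tilde x)$.

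I do not anticipate a substantial obstacle: the only point that needs care is keeping track of the direction of the inequality when composing with the decreasing map $f_1$. For completeness I would also note that the lemma applies directly to the collision penalty \eqref{eq: collision_penalty}: there $f_2(x) = \beta\big\|[\,p_x - p^{\mathrm h}_x\ ;\ p_y - p^{\mathrm h}_y\,]\big\|$ is convex as a norm of an affine function of $x$, and $f_1(y) = \alpha \exp(-y)$ is convex and strictly decreasing on $\re$, so that $\hat f(\cdot \midsc w)$ furnishes the desired convex majorizer of $c$ that is exact at the linearization point.
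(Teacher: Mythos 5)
Your proof is correct and follows essentially the same route as the paper: convexity from composing $f_1$ with an affine map, the inequality from the subgradient inequality for $f_2$ combined with the monotone decrease of $f_1$ (which reverses the direction), and the equality by direct substitution at $\tilde{x}$. The added remark verifying the hypotheses for the collision penalty \eqref{eq: collision_penalty} is a harmless bonus and consistent with the paper's subsequent discussion.
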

\begin{proof}
    \rev{}{Function} $\hat{f}$ is a composition of an affine mapping and a convex 
    function and is therefore convex \cite[Ex. 2.20(a)]{rockafellar2009variational}.
    Inequality \ref{item:ineq} follows directly from the subgradient inequality \cite[Prop. 8.12]{rockafellar2009variational},
    combined with the monotone decrease property of $f_1$.
    The equality \ref{item:eq} is readily verified by substituting 
    $\tilde{x}$ into the definitions of $f$ and $\hat{f}$.
\end{proof}

Several common collision penalty functions satisfy the conditions in \cref{lem: collision_penalty_upper_bound},
including:
\begin{itemize}
    \item $c(x) = \alpha \exp(- \beta \norm{x}^2_{\Sigma})$ with $\Sigma \succ 0$, such as in \cite{evens2022learning, nishimura2020risk}, 
        where $f_1(z) \dfn \alpha\exp(-\beta z)$, and $f_2(x) \dfn \norm{x}^2_{\Sigma}$;
    \item $c(x) = \alpha \exp(- \beta \norm{x})$ 
        where $f_1(z) \dfn \alpha \exp(- \beta z)$, and $f_2(x) \dfn \norm{x}$;
    \item $c(x) = 1 / (\alpha + \beta \norm{x})^p$ with $p > 0$, such as in \cite{wang2020game},
        where $f_1(z) \dfn 1 / (\alpha + \beta z)^p$, and $f_2(x) \dfn \norm{x}$;
\end{itemize}
where $\alpha, \beta > 0$. 
In our experiment,
we select the form $c(x) = \alpha \exp(- \beta \norm{x})$ 
since it \rev{is approximated quite well by}{exhibits small approximation gap with} the upper bound in \cref{lem: collision_penalty_upper_bound} \rev{}{across the domain}.
To apply our formulation, we first majorize the loss function $\loss^{\mathrm{o}}$ or $\loss^{\mathrm{p}}$
using \cref{lem: surrogate_optimistic} or \cref{lem: surrogate_pessimistic}, respectively.
Then, we replace the collision penalty with its convex upper bound by applying \cref{lem: collision_penalty_upper_bound},
using the previous iterate as $\tilde{x}$.

\subsection{\rev{Simulation with Different \texorpdfstring{$\gamma$}{gamma}}{Sensitivity of \texorpdfstring{$\gamma$}{gamma}}}\label{sec: compare_gamma}
\begin{figure}[tb]
    \centering
    \vspace{1mm}
    \includegraphics[width=0.45\textwidth]{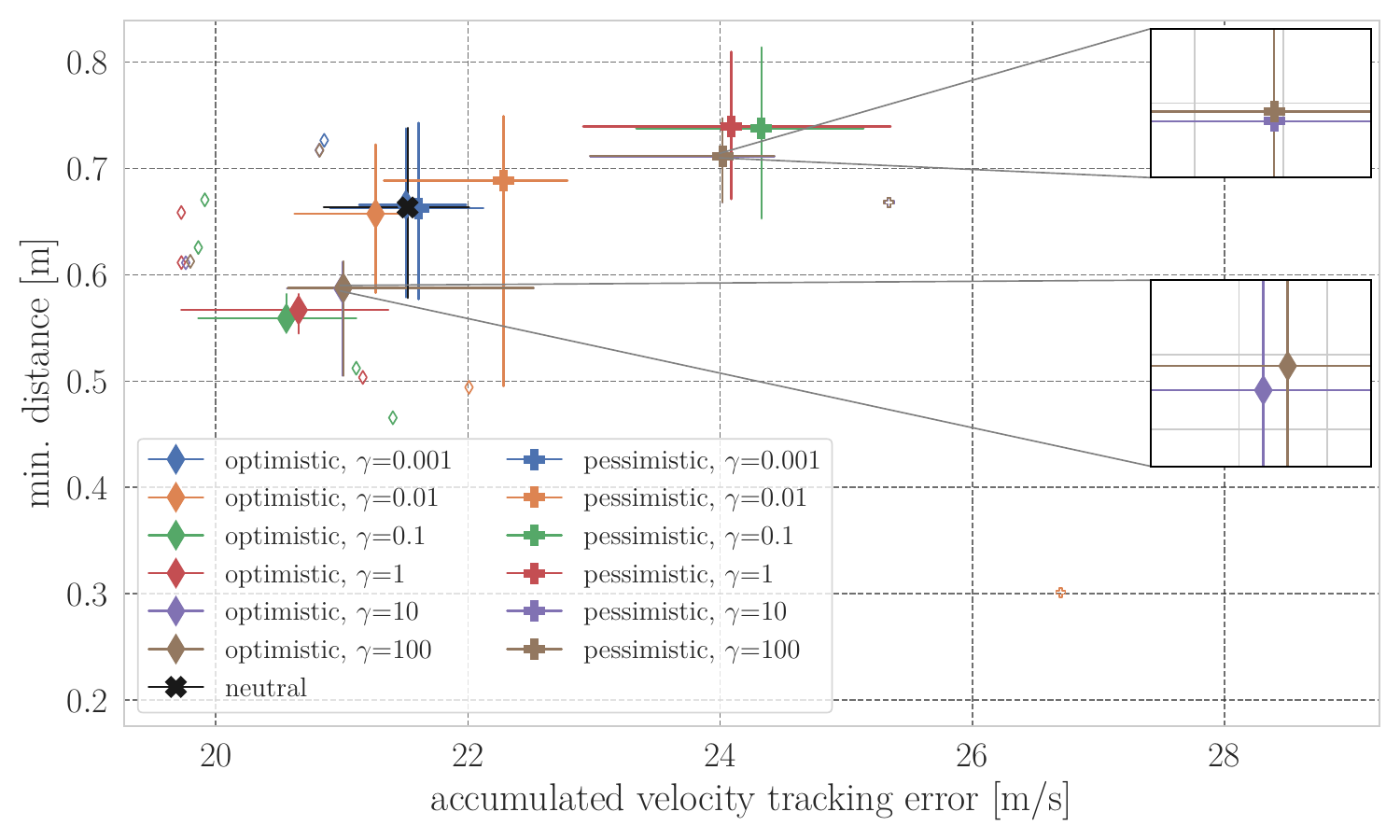}
    \caption{\small{Performance comparison of \rev{risk-sensitive formulation}{different formulations} with $\gamma = 10^{i}, i \in \N_{[-3, 2]}$. 
    The \rev{diamonds and plus signs}{solid markers} represent the median.
    The lines extend from $Q_1 - \mathrm{IQR}$ to $Q_3 + \mathrm{IQR}$,
    where $\mathrm{IQR}$ is the difference between the first and the third quartile ($Q_1$ and $Q_3$).
    Data points beyond this range are plotted individually as outliers. 
    Same color indicates the same value of $\gamma$.}}
    \label{fig: comparison_corridor}
\end{figure}
\rev[choice_gamma]{}{
    To test the impact of different $\gamma$ values,
    we set $\gamma = 10^{i}$, $i \in \N_{[-3, 2]}$.
    This range spans from small values that approximate the risk-neutral case to larger values that approach joint minimization and minimax problems.
}
\rev{}{
    To compare with the risk-neutral case,
we apply the same termination criterion measuring the optimality error \cite[Eq. (6)]{wachter2006implementation} for the proposed method,
with the tolerance $\epsilon_{\mathrm{tol}} = 0.003$.
}
To reduce the computational complexity,
we branch the scenario tree for first \rev{5}{$N_b=5$} steps for prediction horizon $N=20$. 
The final selected mode is maintained for the last 15 steps.
The robot is initialized at $x_0^{\mathrm{bot}} = \bmat{-3 & 0 & 0 & 0}^\top$
while the human position is randomly initialized.
Each experiment is repeated 10 times to mitigate randomness effects.
The closed-loop performance is evaluated by
the accumulated velocity tracking error
and the minimal distance between the robot and the human.
The result is summarized in \cref{fig: comparison_corridor}.

As shown in \cref{fig: comparison_corridor}, 
the optimistic formulation yields reduced velocity tracking error but smaller minimal distance than its pessimistic counterparts.
Both formulations exhibit similar behavior as they converge towards the risk-neutral formulation ($\gamma \to 0$).
For the optimistic formulation, 
initial increases in $\gamma$ simultaneously decreases velocity tracking error and minimal distance,
\rev{though}{as the optimistic formulation accounts for cooperative behavior that jointly minimizes the loss. Though}
at large $\gamma$ values, 
tracking error deteriorates 
due to the finite-infinite prediction horizon discrepancies.
Conversely, 
the pessimistic formulation initially exhibits increased tracking error and distance with rising $\gamma$, 
\rev{before demonstrating improved tracking at high $\gamma$ values—a result of}{
    as it accounts for unfavorable behavior that maximizes the loss.
    At high $\gamma$ values, it demonstrates improved tracking as a result of}
    the discrepancy between the finite-horizon worst-case approximation and the underlying infinite-horizon problem.
\rev{An example simulation is shown in \cref{fig: example_simulation}.
We observe that the trajectory controlled by the optimistic formulation takes a shorter detour around the human than the pessimistic formulation, 
resulting in lower velocity tracking error but closer distance.
\begin{figure}[tb]
    \centering
    \vspace{1mm}
    \includegraphics[width=0.4\textwidth]{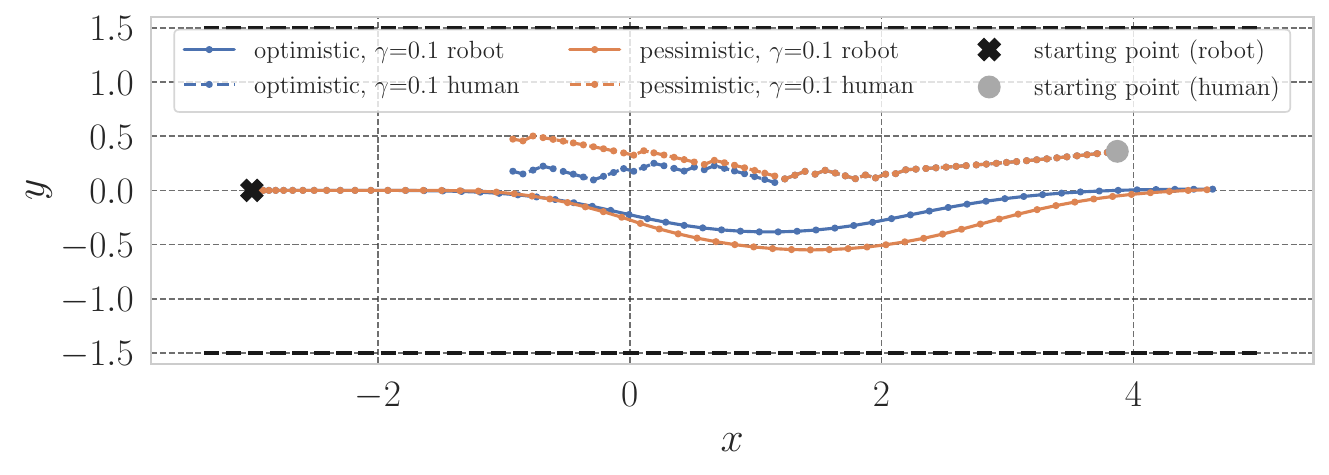}
    \caption{\small{Closed-loop trajectory of one simulation}}
    \label{fig: example_simulation}
\end{figure}
It is noteworthy that across all simulations,
the solver converges within 10 iterations using termination criterion 
$\loss^i(\ve{x}^{m}, \ve{u}^{m}) - \loss^i(\ve{x}^{m+1}, \ve{u}^{m+1}) \leq 1$ 
for all time steps, where $i \in \{\opti, \pess\}$.
Despite this relatively loose termination condition, 
the simulation results confirm that the controller maintains effective performance.
}{}

\subsection{Runtime Analysis}\label{sec: runtime_analysis}

As suggested in \cref{sec: connection_risk_neutral} and in \cref{sec: compare_gamma},
the risk-sensitive formulation approximates the risk-neutral formulation with a small $\gamma$.
We hence compare the runtime performance of the risk-sensitive formulation with $\gamma = 10^{-3}$ and the risk-neutral formulation
to demonstrate its computational advantage.
We implement the convex problem of the proposed MM algorithm using \texttt{CVXPY} \cite{diamond2016cvxpy},
and solve it via \texttt{MOSEK} \cite{mosek}.
The risk-neutral problem is solved using \texttt{IPOPT} \cite{wachter2006implementation}.
For problems \eqref{eq: risk_neutral_ocp}, \eqref{eq: optimistic_ocp}, and \eqref{eq: pessimistic_ocp} 
we select the initial state $x_t = \bmat{-2.5 & 0 & 1 & 0 & 1 & 0.2}^{\rev{}{\top}}$ to ensure constraints are activated within the horizon.
We branch the scenario tree for the first \rev{5}{$N_b=5$} steps with varying prediction horizons.
Using the same set of \rev{20}{10} random initial guesses for all algorithms,
the experiment is conducted on an Intel Core i7-11700@\SI{2.50}{G\hertz} machine.

To conduct a fair comparison,
we apply the same termination criterion measuring the optimality error \cite[Eq. (6)]{wachter2006implementation} for all algorithms
with the tolerance $\epsilon_{\mathrm{tol}} = 0.003$. 
\rev{The mean of the runtime is summarized in \cref{fig: runtime_comparison}.}{The optimal loss value and the mean of the runtime are summarized in \cref{tab: runtime_comparison}.}
\rev{The}{From \cref{tab: runtime_comparison}, we observe that each solver converges to a solution of similar quality measured by the loss value.
However, the} 
risk-sensitive formulations demonstrate a significant improvement in runtime performance compared to the risk-neutral formulation.
\rev{For risk-sensitive formulations,
we explicitly report the inner solver \texttt{MOSEK} runtime and termination criterion evaluation due to \texttt{CVXPY} compilation overhead.}{}

\rev{
\begin{figure}[tb]
    \centering
    \includegraphics[width=0.8\linewidth]{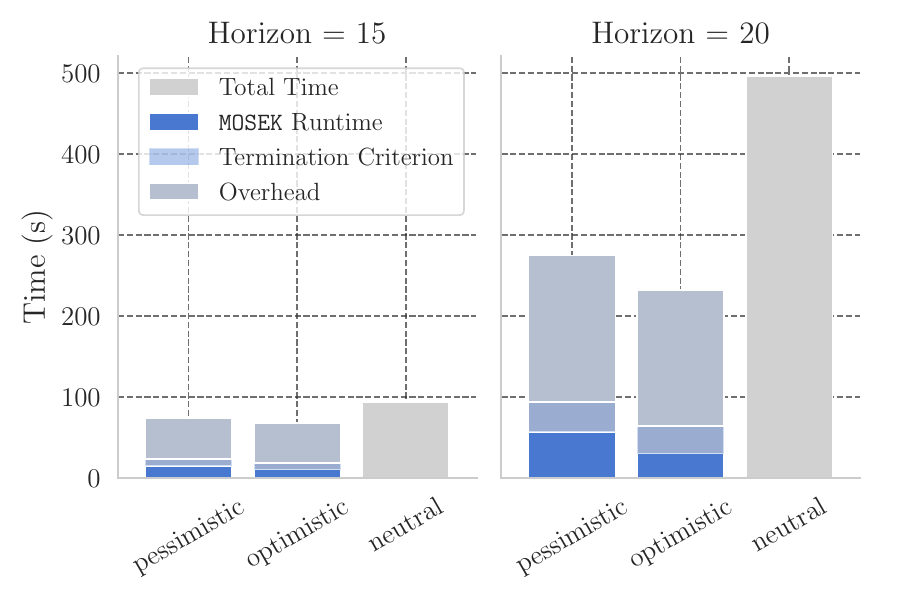}
    \caption{\small{Runtime comparison using termination criterion \cite[Eq. (6)]{wachter2006implementation}
        with tolerance $\epsilon_{\mathrm{tol}} = 0.003$.
        Risk-neutral problem is solved by \texttt{IPOPT}.
        The surrogate problems in MM algorithm are solved by \texttt{MOSEK}.
        We explicitly report the runtime of \texttt{MOSEK} and termination criterion evaluation due to the overhead of \texttt{CVXPY} compilation.
    }}
    \label{fig: runtime_comparison}
\end{figure}
}{}

\begin{table}[bt]
    \centering
    \vspace*{1mm}
    \caption{\rev{}{\small{Runtime comparison using termination criterion \cite[Eq. (6)]{wachter2006implementation}
    with $\epsilon_{\mathrm{tol}} = 0.003$.
    Risk-neutral problem is solved by \texttt{IPOPT}.
    The surrogate problems in MM algorithm are solved by \texttt{MOSEK}.}}}
    \label{tab: runtime_comparison}
    \rev{}{
    \resizebox{0.48\textwidth}{!}{
    \begin{tabular}{r rr rr}
        \toprule 
                      & \multicolumn{2}{c}{$N = 15$}                                    & \multicolumn{2}{c}{$N = 20$}\\
                      \cmidrule(lr){2-3}\cmidrule(lr){4-5}
                      & $\loss$         & runtime [\SI{}{\second}]  & $\loss$               & runtime [\SI{}{\second}] \\
        \midrule
        risk-neutral  & $85.3\pm1.1$    & $128.2\pm15.2$            & $187.4\pm3.8$         & $285.0\pm69.5$\\
        optimistic    & $83.8\pm0.0$    & $11.6\pm0.6$              & $183.6\pm2.9$         & $33.8\pm3.8$\\
        pessimistic   & $84.2\pm0.0$    & $17.1\pm0.8$              & $189.8\pm4.3$         & $60.1\pm4.9$\\
        \bottomrule
    \end{tabular}
    }}
\end{table}

\begin{table}[bt]
    \centering
    \vspace*{1mm}
    \caption{\rev[runtime_closed_loop]{}{Performance comparison of optimistic formulation using different scenario tree configurations.}}
    \label{tab: runtime_closed_loop}
    \rev{}{
    \resizebox{0.48\textwidth}{!}{
    \begin{threeparttable}
    \begin{tabular}{r rr rr}
        \toprule 
                                  & \multicolumn{2}{c}{Runtime per time step} [\SI{}{\second}] & min. dist. ($\uparrow$) [\SI{}{\meter}] & AVTE\tnote{\textdagger} $\,$($\downarrow$) [\SI{}{\meter / \second}]\\
                                  \cmidrule(lr){2-3}
                                  & mean            & max                  &  mean $\pm$ std.        & mean $\pm$ std. \\
        \midrule
        $N=15$, $N_b=2$\tnote{*}  & $0.091$         & $0.241$              & $0.532\pm0.039$         & $20.280\pm0.436$\\
        $N=20$, $N_b=5$           & $8.773$         & $44.342$             & $0.565\pm0.053$         & $20.544\pm0.467$\\
        \bottomrule
    \end{tabular}
    \begin{tablenotes}
        \item [\textdagger] Accumulated velocity tracking error
        \item [*] Run 1 MM iteration
    \end{tablenotes}
    \end{threeparttable}
    }}
\end{table}

\rev{}{
    The computational complexity grows exponentially w.r.t. the branching horizon $N_b$.
    In practice,
    different heuristics can be applied to reduce the runtime,
    such as using a loose termination criterion,
    or reducing the branching horizon $N_b$.
    \cref{tab: runtime_closed_loop} shows an example with 10 simulations,
    where these heuristics effectively reduce the runtime while maintaining similar closed-loop performance.
    However, this requires application-specific tuning.
    We leave the development of a general approach to reduce the complexity for future work.
}

\section{Conclusion}\label{sec: conclusion}
We propose a risk-sensitive MPC formulation for \rev{mixture models with decision-dependent distributions.}{an MoE model}
\rev{The proposed formulation}{that} enables the application of the MM principle\rev{}{.} 
\rev{to derive}{This approach derives} a convex upper bound of the nonconvex \rev{optimization}{optimal control} problem\rev{.}{}
\rev{This same principle can be applied to handle the}{with a} nonconvex collision penalty\rev{term}{},
allowing us to solve the original nonconvex \rev{optimal control}{}problem through a sequence of convex surrogate problems.
A numerical experiment validates the effectiveness and the runtime benefits of the proposed approach,
though the solver currently faces computational challenges due to the exponential growth in complexity from scenario tree branching. 
Future work will focus on developing an efficient convex inner solver and scenario reduction techniques to address this computational burden.
Moreover, since the \rev[sensitivity_conclusion]{system}{closed-loop} behavior is highly sensitive to the risk-sensitivity parameter,
we plan to investigate on \rev{adaptively tuning the value of the risk-sensitivity parameter}{adaptive parameter tuning} based on the specific control scenario.

\addtolength{\textheight}{-12cm}   %

\addcontentsline{toc}{section}{References}
\bibliographystyle{IEEEtran}
\bibliography{IEEEabrv,reference}

\end{document}